\newtheorem{thm}{Theorem}
\newtheorem{lem}[thm]{Lemma}
\newcommand\dist{\buildrel d \over =}
\begin{document}

\begin{center}
\LARGE
\textbf{Ruin probability for renewal risk models with neutral net profit condition}\\[6pt]
\medskip
\small
\textbf {Andrius Grigutis, Arvydas Karbonskis, Jonas Šiaulys}\\[6pt]
Institute of Mathematics, Vilnius University,\\ Naugarduko 24, Vilnius, Lithuania,  LT-03225 \\
\medskip
andrius.grigutis@mif.vu.lt, arvydas.karbonskis@mif.stud.vu.lt,\\ jonas.siaulys@mif.vu.lt\\[6pt]
\end{center}

\begin{abstract}
In ruin theory, the net profit condition intuitively means that the incurred random claims on average do not occur more often than premiums are gained. The breach of the net profit condition causes guaranteed ruin in few but simple cases when both the claims' inter-occurrence time and random claims are degenerate. In this work, we give a simplified argumentation for the unavoidable ruin when the incurred claims on average occur equally as the premiums are gained. We study 
the discrete-time risk model with $N\in\mathbb{N}$ periodically occurring independent distributions, the classical risk model, also known as Cramér–Lundberg risk process, and the more general E. Sparre Andersen model. 

 \vskip 2mm

\textbf{Keywords:} net profit condition, ruin probability, discrete-time risk model, classical risk model, E.S. Andersen risk model, random walk.

 \vskip 2mm

 \textbf{MSC 2020:} 60G50, 60J80, 91G05. 
\end{abstract}

\nocite{2009ProcDETAp}

\section{Introduction}\label{ii}

In 1957, during the 15th International Congress of Actuaries, E. Sparre Andersen \cite{AES} proposed to use a \textit{renewal risk model} to describe the behavior of the insurer's surplus. According to Andersen's proposed model, the insurer's surplus process $W$ admits the following representation
\begin{align}\label{01}
W(t)=u+ct-\sum_{i=1}^{\Theta(t)}X_i,\ t\geqslant 0,
\end{align}
where:\\
\indent $\bullet$ $u\geqslant 0$ denotes the initial insurer's surplus, $W(0)=u$;\\
\indent $\bullet$ $c>0$ denotes the premium rate per unit of time;\\
\indent $\bullet$ the cost of claims $ X_1,\,X_2,\,\ldots$ are independent copies of  a non-negative random variable $X$;\\
\indent $\bullet$ the inter-occurrence times $\theta_1,\,\theta_2,\,\ldots $ between claims
 is another sequence of independent
copies of a non-negative random variable $\theta$ which is not degenerate at zero, i. e. $\mathbb{P}(\theta=0)<1$;\\
\indent
$\bullet$ the sequences $\{X_1,\,X_2,\,\ldots\}$ and
$\{\theta_1,\,\theta_2,\,\ldots\}$ are mutually independent;
\\
\indent $\bullet$ $\Theta(t)=\#\{n\geqslant 1: T_n\in[0,t]\}$ is the renewal process generated by the random variable $\theta$, where
$T_n=\theta_1+\theta_2+\ldots +\theta_n$.

    The main critical characteristics of the defined renewal risk model \eqref{01} are the \textit{time of ruin }
\begin{align*}
\tau_u=\begin{cases}&\inf\{t\geqslant 0: W(t)<0\},\\
& \infty, \mbox{\ if\ }\ W(t)\geqslant 0 \ \mbox{\ for all\ }\ t\geqslant 0 \end{cases}
\end{align*}
and the \textit{ultimate time ruin probability} (or just the \textit{ruin probability})
\begin{align*}
\psi(u)=\mathbb{P}(\tau_u<\infty).
\end{align*}
The model \eqref{01} and the definition of $\psi(u)$ imply that for all $u\geqslant 0$
\begin{align}\label{2}
\psi(u)&=\mathbb{P}\left(\bigcup_{t\geqslant 0}\left\{W(t)<0\right\}\right)\nonumber\\
&=\mathbb{P}\left(\inf_{n\geqslant1}\left\{u+c T_n-\sum_{i=1}^{n}X_n\right\}<0\right)\nonumber\\
&=\mathbb{P}\left(\sup_{n\geqslant1}\sum_{k=1}^n(X_k-c \theta_k)>u\right).
\end{align}

Thus, the ultimate time ruin probability $\psi(u)$ is nothing but the tail of the distribution function of the random variable $\sup_{n\geqslant1}\sum_{k=1}^n(X_k-c \theta_k)$. In ruin theory the difference $\mathbb{E}X-c\,\mathbb{E}\theta$ describes the so-called {\it net profit condition}. It is well known that $\psi(u)=1$ for any $u\geqslant 0$ if $\mathbb{E}X-c\,\mathbb{E}\theta>0$ where this fact is easily implied by the strong law of large numbers, see \cite[Prop. 7.2.3]{Resnic}. Also, $\psi(u)=1$ for any $u\geqslant 0$ if $\mathbb{E}X-c\,\mathbb{E}\theta=0$ (see \cite[pp. 559-564]{Resnic}), except in some simple cases when both random variables $X$ and $\theta$ are degenerate. We call the net profit condition to be {\it neutral} if $\mathbb{E}X-c\,\mathbb{E}\theta=0$ and say that it holds if $\mathbb{E}X-c\,\mathbb{E}\theta<0$. In general, the latter fact that 
\begin{align}\label{zero_net}
\mathbb{E}X-c\,\mathbb{E}\theta=0 \quad \Rightarrow \quad \psi(u)=1
\end{align}
for all $u\geqslant 0$,  can be deduced from some deep study of random walk, see for example \cite{Feller}, \cite{Resnic}, \cite{Spitzer}. Therefore, the mathematical curiosity drives us to derive \eqref{zero_net} by using simpler arguments. 

In \cite{DS}, authors basically use Silverman–Toeplitz theorem to prove \eqref{zero_net} for the discrete-time and classical risk models. The proofs presented for both models are significantly simpler than those presented in \cite{Feller}, \cite{Resnic}, \cite{Spitzer}. In this article, we show that the implication \eqref{zero_net} can be simplified even further, however in some instances using the Pollaczek-Khinchine formula.  The desired simplification of the proof can be achieved by defining the random vector $(X^*,\, X)$, where $X^*$ is the new random variable which is arbitrarily close to $X$ and $\mathbb{P}(X^*\leqslant X)=1$.\footnote{Originaly the idea was raised by the fourth-year student of Faculty of Mathematics and Informatics Justas Klimavičius in 2017.} This way is similar to the probabilistic proof of the Turan's theorem given in \cite{AZ}\footnote[2]{We thank Professor Eugenijus Manstavičius for pointing to this fact.}. For the defined random variable $X^*$ we make the net profit condition satisfied $\mathbb{E}X^*-c\mathbb{E}\theta<0$ and show that the known algorithms of the ruin probability calculation under the net profit condition imply $\psi(u)=1$ for all $u\geqslant 0$ as $X^*$ approaches to $X$.

    In Section \ref{22} we derive \eqref{zero_net} for the more general discrete-time risk model when $\theta \equiv 1$, $c\in\mathbb{N}$ and non-negative independent integer-valued random variables $X_i\dist X_{i+N}$ for all $i\in\mathbb{N}$ and some fixed natural $N$, i.e. we allow the random variables $X_1,\,X_2,\,\ldots$ in model \eqref{01} to be independent but not necessarily identically distributed. Obviously, if $N=1$ then we get that r.v.s $X_1,\,X_2,\,\ldots$ are identically distributed. In Section \ref{sec:Classical}, we derive \eqref{zero_net} for the classical risk model when $\Theta(t)$ in \eqref{01} is assumed to be a Poison process with intensity $\lambda>0$. Recall that in this case
$$
\mathbb{P}(\Theta(t+s)-\Theta(s)=n)=e^{-\lambda t}\frac{(\lambda t)^n}{n!}
$$
for all $n\in\mathbb{N}$ and  $t,\,s>0$. In the last Section \ref{sec:Andersen}, we consider the most general E.S. Andersen's model \eqref{01} in terms of proving \eqref{zero_net} by the known facts of ruin probability calculation under the net profit condition. More precisely, we reformulate and give different proofs than the existing ones to the following three theorems.

\begin{thm}\label{thm:discrete}
Suppose the insurer's surplus process $W(t)$  varies according to the discrete-time risk model \eqref{discrete_time} with $N$ periodically occurring independent discrete and integer-valued non-negative r.v.s $X_i\mathop{=}\limits^{d}X_{i+N}$ and $\theta\equiv 1$. Let $S_N=X_1+X_2+\ldots+X_N$. If the net profit condition is neutral $cN-\mathbb{E} S_N=0$ and $\mathbb{P}(S_N=cN)<1$, the ultimate time ruin probability $\psi(u)=1$ for all $u\in\mathbb{N}\cup\{0\}$.
\end{thm}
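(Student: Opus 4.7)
The plan is to slightly reduce the claims so that the perturbed model satisfies the \emph{strict} net profit condition, invoke the explicit expression for the ruin probability available in that regime, and then pass to the limit as the perturbation vanishes.

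For the coupling step: since $\mathbb{E} S_N = cN \geqslant N \geqslant 1$, not all $X_i$ are almost surely zero, so there exist $j \in \{1,\ldots,N\}$ and an integer $k \geqslant 1$ with $\mathbb{P}(X_j = k) > 0$. For each small $\varepsilon > 0$, I would define an $N$-periodic perturbed sequence $X_1^{(\varepsilon)}, X_2^{(\varepsilon)}, \ldots$ by modifying the common distribution of the copies $X_{j+mN}$, $m \geqslant 0$, to move probability mass $\varepsilon$ from the atom at $k$ down to $k-1$, coupled to the originals so that $X_{j+mN}^{(\varepsilon)} \leqslant X_{j+mN}$ almost surely, and by leaving $X_i^{(\varepsilon)} = X_i$ for $i \not\equiv j \pmod N$. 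Then $\mathbb{E} S_N^{(\varepsilon)} = cN - \varepsilon < cN$, while pointwise $W^{(\varepsilon)}(n) \geqslant W(n)$ for every $n$; consequently the corresponding ruin probabilities satisfy $\psi^{(\varepsilon)}(u) \leqslant \psi(u) \leqslant 1$ for all $u \in \mathbb{N}\cup\{0\}$.

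Under the strict condition $\mathbb{E} S_N^{(\varepsilon)} < cN$, the survival probability $\varphi^{(\varepsilon)}(u) = 1 - \psi^{(\varepsilon)}(u)$ of the $N$-periodic discrete-time model is determined by a coupled linear system of $N$ renewal-type recurrences, one per phase of the cycle, and admits a closed-form solution in which the positive safety loading $cN - \mathbb{E} S_N^{(\varepsilon)} = \varepsilon$ appears as a multiplicative prefactor. For $N = 1$ this is simply the discrete analogue of Cramér's identity $\varphi(0) = 1 - \mathbb{E}X/c$. Letting $\varepsilon \downarrow 0$ therefore forces $\varphi^{(\varepsilon)}(u) \to 0$ for each fixed $u$, so $\psi^{(\varepsilon)}(u) \to 1$; combined with the comparison above, this forces $\psi(u) = 1$.

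The main obstacle lies in the genuinely periodic case $N > 1$: one has to verify that the safety-loading factor $\varepsilon$ really does appear multiplicatively in each $\varphi^{(\varepsilon)}(u)$. The $N = 1$ case reduces to a single scalar recurrence whose normalization exposes $c - \mathbb{E}X$ immediately, but for $N > 1$ one must work with the linear system satisfied by the phase-indexed survival probabilities $\varphi_1^{(\varepsilon)}, \ldots, \varphi_N^{(\varepsilon)}$, pin down the correct boundary/normalization equation (presumably through a summation over $u$ or through a generating-function identity), and show that all $N$ of them tend to $0$ at a linear rate in $\varepsilon$. Once this dependence is made explicit, the monotone coupling and the limit $\varepsilon \downarrow 0$ close the argument.
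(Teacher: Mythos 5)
Your overall strategy coincides with the paper's: reduce the claims slightly via a monotone coupling $X^*\leqslant X$ a.s. so that the perturbed model satisfies the strict net profit condition $cN-\mathbb{E}S^*_N=\varepsilon>0$, deduce $\varphi(u)\leqslant\varphi^{(\varepsilon)}(u)$ from the pathwise comparison, and let $\varepsilon\to0^+$. The coupling and comparison part of your proposal is correct (the paper does the same thing, moving mass from an atom $l\geqslant2$ down to $0$ when $c=N=1$, and from a larger value of a suitably chosen $X_j$ down to a smaller one in the general case), and your choice of shifting mass from $k$ to $k-1$ works equally well for this part.

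However, the step you yourself label ``the main obstacle'' is a genuine gap, and it is exactly where the substance of the theorem lies. You assert that for the $N$-periodic model with positive loading the survival probabilities admit a closed form in which $\varepsilon$ appears as a multiplicative prefactor, but you neither prove this nor point to a result that delivers it; for $N>1$ and general $c\in\mathbb{N}$ this is not a routine normalization of a scalar recurrence. The paper does not prove it from scratch either: it invokes Corollary 3.2 of \cite{GR} for $c=N=1$, which gives $\sum_{u\geqslant0}\varphi^{*}_\varepsilon(u)s^u=\varepsilon/(G_{X^*}(s)-s)$, and Theorems 2 and 4 of \cite{SA} for general $c,N$, where $\varphi^{*}_\varepsilon(0)=m^{*(1)}_0/\mathbb{P}(S^*_N=0)$ and the vector of $m$'s solves a $cN\times cN$ linear system whose right-hand side is $(0,\ldots,0,\varepsilon)^T$. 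Even with these results in hand, the invertibility of that matrix is not known in general, so the paper falls back on \cite[Thm.~3]{SA} to conclude $\varphi(0)=0$, after which $\varphi(u)=0$ for $u\geqslant1$ is obtained from the recursion, or from the generating-function expression whose numerator is a multiple of $\varepsilon$. To turn your sketch into a proof you must either cite this machinery explicitly or establish the phase-indexed generating-function identity yourself; note also that a linear rate in $\varepsilon$ is more than you need --- $\varphi^{(\varepsilon)}(u)\to0$ as $\varepsilon\to0^+$ for each fixed $u$ suffices --- but even that weaker statement is precisely what currently remains unproved in your argument.
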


\begin{thm}\label{thm:classical}
Let $W(t),\,t\geqslant0$ be a surplus process of the classical risk model generated by a random claim amount $X$, an exponentially distributed inter-occurrence time $\theta$ with mean $\mathbb{E}\theta=1/\lambda,\,\lambda>0$, and a constant premium rate $c>0$. If the net profit condition is neutral $\lambda\mathbb{E}X=c$, then $\psi(u)=1$ for all $u\geqslant 0$.
\end{thm}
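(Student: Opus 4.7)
The plan is to follow the strategy sketched in the introduction: construct an auxiliary random variable $X^*$ with $\mathbb{P}(X^*\leqslant X)=1$ and $\mathbb{E}X^* < \mathbb{E}X = c/\lambda$, invoke the Pollaczek-Khinchine formula for the classical risk model driven by $X^*$ (for which the strict net profit condition now holds), and then let $X^*$ tend to $X$. Concretely, for $\varepsilon\in(0,1/2)$ I would set $X^*_i=(1-\varepsilon)X_i$ and couple the auxiliary model to the original by keeping the same Poisson process $\Theta$; then pathwise $W^*(t)\geqslant W(t)$, so the auxiliary ruin probability $\psi^*$ obeys $\psi^*(u)\leqslant \psi(u)\leqslant 1$, and it suffices to show $\psi^*(u)\to 1$ as $\varepsilon\to 0^+$.

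The auxiliary safety loading equals $\rho^*=\lambda\mathbb{E}X^*/c=1-\varepsilon\in(0,1)$, so the Pollaczek-Khinchine formula applies and yields
\[
1-\psi^*(u)=(1-\rho^*)\sum_{n=0}^{\infty}(\rho^*)^n F_{I^*}^{*n}(u)=\varepsilon\sum_{n=0}^{\infty}(1-\varepsilon)^n F_{I^*}^{*n}(u),
\]
where $F_{I^*}$ is the equilibrium (integrated tail) distribution of $X^*$. A short scaling computation gives $F_{I^*}(x)=F_I(x/(1-\varepsilon))$, where $F_I$ is the equilibrium distribution of $X$, and by convolution $F_{I^*}^{*n}(u)\leqslant F_I^{*n}(2u)$ for every $\varepsilon\leqslant 1/2$. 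Consequently
\[
1-\psi^*(u)\leqslant \varepsilon\sum_{n=0}^{\infty}F_I^{*n}(2u)=\varepsilon\, U(2u),
\]
where $U$ is the renewal function of the distribution $F_I$.

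Since $\mathbb{E}X<\infty$ and $\mathbb{P}(X>0)>0$, $F_I$ is a proper distribution on $[0,\infty)$ with $F_I(0)=0$, so standard renewal theory guarantees $U(2u)<\infty$ for every finite $u$. Letting $\varepsilon\to 0^+$ forces $\psi^*(u)\to 1$, and combined with $\psi^*(u)\leqslant \psi(u)\leqslant 1$ this yields $\psi(u)=1$.

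The main obstacle is that the Pollaczek-Khinchine series $\sum_n(\rho^*)^n F_{I^*}^{*n}(u)$ a priori blows up as $\rho^*\uparrow 1$, so the vanishing prefactor $(1-\rho^*)=\varepsilon$ must be shown to dominate; the coupling $X^*=(1-\varepsilon)X$ is precisely what lets us replace the $\varepsilon$-dependent $F_{I^*}$ by the fixed $F_I$ at the mildly inflated argument $2u$, reducing the entire question to the single inequality $U(2u)<\infty$.
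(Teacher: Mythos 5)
Your proof is correct, and it follows the same overall strategy as the paper: couple the model with an auxiliary claim $X^*$ satisfying $\mathbb{P}(X^*\leqslant X)=1$ and $\lambda\mathbb{E}X^*<c$, apply the Pollaczek--Khinchine formula to the now subcritical model, and let $\varepsilon\to0^+$ using $\psi^*_\varepsilon(u)\leqslant\psi(u)\leqslant1$. The differences are in the construction of $X^*$ and in how the series is controlled. The paper sets $X^*=X-\varepsilon$ on $\{X>a\}$ and $X^*=X$ otherwise (with $\mathbb{P}(X>a)>0$), obtains $\psi^*_\varepsilon(0)=\lambda\mathbb{E}X^*/c$, and then bounds the Pollaczek--Khinchine series by $\sum_{n}F_I^{*n}(u)<\infty$ via Lemma~\ref{lem:concentration}, where $F_I$ is the equilibrium distribution of $X^*$; that bound is finite for each fixed $\varepsilon$ but still depends on $\varepsilon$, so strictly speaking a remark on uniformity is needed before passing to the limit (it does follow from the exponential bound in the lemma's proof, but the paper leaves it implicit). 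Your multiplicative coupling $X^*=(1-\varepsilon)X$ buys exactly that uniformity in a clean way: the equilibrium distribution scales, $F_{I^*}(x)=F_I\bigl(x/(1-\varepsilon)\bigr)$, hence $F_{I^*}^{*n}(u)\leqslant F_I^{*n}(2u)$ for $\varepsilon\leqslant1/2$ with $F_I$ now the fixed equilibrium distribution of $X$ itself, and the survival probability is bounded by $\varepsilon\,U(2u)$ with an $\varepsilon$-free constant; the finiteness $U(2u)<\infty$ is precisely the content of the paper's Lemma~\ref{lem:concentration} applied to $F_I$ (which has no atom at zero since $\mathbb{E}X=c/\lambda>0$). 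So your route is essentially the paper's, with a variant construction of $X^*$ that makes the final limiting step airtight in one line.
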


\begin{thm}\label{thm:Andersen}
Let $W(t),\,t\geqslant0$ be a surplus process of E. Sparre Andersen model generated by a random claim amount $X$, inter-occurrence time $\theta$, and a constant premium rate $c>0$. If the net profit condition is neutral $\mathbb{E}X/\mathbb{E}\theta=c$ and $\mathbb{P}(X=c\theta)<1$, then $\psi(u)=1$ for all $u\geqslant 0$.
\end{thm}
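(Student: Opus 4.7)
My strategy is the coupling programme announced in the Introduction: sandwich $\psi(u)$ from below by the ruin probability of a modified model whose claims $X^*_\varepsilon$ lie below $X$ almost surely and already satisfy the \emph{strict} net profit condition, then send $\varepsilon \downarrow 0$ and read off $\psi(u)=1$ in the limit.

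Concretely, for $\varepsilon \in (0,1)$ I set $X^*_{\varepsilon,i} := (1-\varepsilon)X_i$, which yields $X^*_{\varepsilon,i} \leq X_i$ a.s., $\mathbb{E}X^*_\varepsilon = (1-\varepsilon)\mathbb{E}X < c\,\mathbb{E}\theta$, $X^*_\varepsilon \uparrow X$ as $\varepsilon \downarrow 0$, and $\mathbb{P}(X^*_\varepsilon = c\theta) < 1$ because $\mathbb{P}(X=c\theta) < 1$. Keeping the inter-occurrence times $\theta_i$ common to both models, the modified surplus process $W^*_\varepsilon(t) := u + ct - \sum_{i=1}^{\Theta(t)} X^*_{\varepsilon,i}$ pathwise dominates $W(t)$, hence $\psi^*_\varepsilon(u) \leq \psi(u)$ for every $u \geq 0$ and every $\varepsilon \in (0,1)$. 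It therefore suffices to show that $\psi^*_\varepsilon(u) \to 1$ as $\varepsilon \downarrow 0$ for each fixed $u \geq 0$.

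Under the strict net profit condition I would invoke the ladder-height (Pollaczek-Khinchine-type) representation for the Andersen model,
\[
1 - \psi^*_\varepsilon(u) = (1 - p_\varepsilon)\sum_{n=0}^{\infty} p_\varepsilon^n\, G_\varepsilon^{*n}(u),
\]
where $p_\varepsilon := \mathbb{P}\bigl(\sup_{n\geq 1} S^*_{\varepsilon,n} > 0\bigr) \in (0,1)$, $S^*_{\varepsilon,n} := \sum_{k=1}^n(X^*_{\varepsilon,k} - c\theta_k)$, and $G_\varepsilon$ denotes the conditional law of the first ascending ladder height. To push this quantity to $0$ I split at a cutoff $K$: the head $(1-p_\varepsilon)\sum_{n \leq K} p_\varepsilon^n G_\varepsilon^{*n}(u) \leq (1-p_\varepsilon)(K+1)$ vanishes once $p_\varepsilon \to 1$, while the tail $(1-p_\varepsilon)\sum_{n>K} p_\varepsilon^n G_\varepsilon^{*n}(u)$ is made arbitrarily small by choosing $K$ large, exploiting that for fixed $u$ the convolution $G_\varepsilon^{*n}(u)$ decays in $n$, ideally uniformly in $\varepsilon$.

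The main obstacle is bundling the two ingredients $p_\varepsilon \to 1$ and the uniform-in-$\varepsilon$ tail decay of $G_\varepsilon^{*n}(u)$ without relying on the deeper random-walk machinery that the paper wishes to bypass. For $p_\varepsilon \to 1$, one natural route is to express $p_\varepsilon$ via a Wiener-Hopf-type identity that depends continuously on the step law of $S^*_\varepsilon$ and to pair it with an elementary verification that the limiting probability equals $1$ under $\mathbb{E}Y=0$ and $\mathbb{P}(Y=0)<1$; an alternative is a skeleton-type reduction that bootstraps the convergence from the already-established Theorems~\ref{thm:discrete} and \ref{thm:classical}. Either way, combining the pathwise bound $\psi^*_\varepsilon(u) \leq \psi(u)$ with the convergence $\psi^*_\varepsilon(u) \to 1$ forces $\psi(u) = 1$ for every $u \geq 0$, which is the claim.
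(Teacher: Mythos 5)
Your coupling set-up coincides with the paper's: replace $X$ by a pathwise-smaller claim with strictly negative drift (the paper subtracts $\varepsilon$ on the event $\{X>a\}$, you rescale by $1-\varepsilon$; both choices work), deduce $\psi^*_\varepsilon(u)\leqslant\psi(u)$, and try to force $\psi^*_\varepsilon(u)\to1$ as $\varepsilon\to0^+$. The genuine gap is exactly the step you yourself call ``the main obstacle'': you never prove $p_\varepsilon=\psi^*_\varepsilon(0)\to1$, and neither of your two suggested routes closes it. The ``elementary verification that the limiting probability equals $1$ under $\mathbb{E}Y=0$ and $\mathbb{P}(Y=0)<1$'' is precisely the statement being proved, since $\psi(0)=\mathbb{P}\bigl(\sup_{n\geqslant1}S_n>0\bigr)$, so invoking it is circular; and a ``skeleton-type reduction'' to Theorems \ref{thm:discrete} and \ref{thm:classical} is not available as stated, because those theorems cover only integer-valued claims with $\theta\equiv1$ and exponential $\theta$ respectively, while here $(X,\theta)$ is arbitrary and any discretisation perturbs the drift, which is the whole difficulty at the zero-drift boundary. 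Soft ``continuity in the step law'' of the supremum functional cannot do the job either: $p_\varepsilon<1$ for every $\varepsilon>0$ while the claimed limit is $1$, so a quantitative identity is indispensable.

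The paper supplies exactly this missing ingredient: the Spitzer/Embrechts--Veraverbeke identity $\psi^*_\varepsilon(0)=1-\exp\bigl\{-\sum_{n\geqslant1}\mathbb{P}(S^*_n>0)/n\bigr\}$ (legitimate because the modified walk has strictly negative drift), the truncation $\varphi^*_\varepsilon(0)\leqslant\exp\bigl\{-\sum_{n=1}^{N}\mathbb{P}(S^*_n>0)/n\bigr\}$, the limit $\varepsilon\to0^+$ at fixed $N$ (using $S^*_{\varepsilon,n}\uparrow S_n$), and finally Spitzer's theorem that $\sum_{n\geqslant1}\mathbb{P}(S_n>0)/n=\infty$ when $\mathbb{E}(X-c\theta)=0$ and $\mathbb{P}(X=c\theta)<1$; letting $N\to\infty$ gives $\varphi(0)=0$. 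For $u>0$ the paper avoids your unproved ``uniform-in-$\varepsilon$ decay of $G_\varepsilon^{*n}(u)$'': it writes $\varphi^*_\varepsilon(u)=\varphi^*_\varepsilon(0)\bigl(1+\sum_{n\geqslant1}(\psi^*_\varepsilon(0))^nH^{*n}(u)\bigr)$ and bounds the bracket by $1+\sum_{n\geqslant1}H^{*n}(u)<\infty$ via the exponential Markov bound of Lemma \ref{lem:concentration}, so that the prefactor $\varphi^*_\varepsilon(0)\to0$ carries the conclusion. In short, your plan reproduces the paper's outer architecture, but the core of the proof --- the Spitzer-series identity combined with the divergence of $\sum_{n\geqslant1}\mathbb{P}(S_n>0)/n$ at zero drift --- is absent, and what you offer in its place is either circular or undeveloped.
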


\section{One auxiliary statement}
Proving Theorems \ref{thm:classical} and \ref{thm:Andersen} we use the Pollaczek–Khinchine formula. This raises the need for the following statement.

\begin{lem}\label{lem:concentration}
Let $\eta_1,\,\eta_2,\,\ldots$ be independent identically distributed non-negative random variables which are not degenerate at zero. Then
$$
\sum_{n=1}^{\infty}\mathbb{P}(\eta_1+\ldots+\eta_n\leqslant x)<\infty
$$
for any $x\geqslant0$.
\end{lem}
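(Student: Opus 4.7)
The plan is to control $\mathbb{P}(\eta_1+\ldots+\eta_n\le x)$ by replacing the sum with a much cruder lower bound built from indicators of ``non-negligible'' summands, and then reduce everything to a binomial tail that decays geometrically in $n$.

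First, since $\eta_1$ is non-negative and not degenerate at zero, the distribution function $F$ of $\eta_1$ satisfies $F(0)<1$, so there exists some $\delta>0$ with $p:=\mathbb{P}(\eta_1>\delta)>0$. The first step is to observe the deterministic bound
$$
\eta_1+\eta_2+\ldots+\eta_n \;\geqslant\; \delta\sum_{i=1}^{n}\mathbbm{1}\{\eta_i>\delta\},
$$
which holds because every $\eta_i$ is non-negative. Setting $k:=\lfloor x/\delta\rfloor$, the event $\{\eta_1+\ldots+\eta_n\le x\}$ therefore forces $\sum_{i=1}^n \mathbbm{1}\{\eta_i>\delta\}\le k$, and the right-hand side is a $\mathrm{Bin}(n,p)$ random variable. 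Consequently, for every $n\ge1$,
$$
\mathbb{P}(\eta_1+\ldots+\eta_n\leqslant x)\;\leqslant\;\sum_{j=0}^{k}\binom{n}{j}p^{j}(1-p)^{n-j}.
$$

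The second step is to see that this binomial tail is summable in $n$. For each fixed $j\in\{0,1,\ldots,k\}$, one has $\binom{n}{j}\le n^{k}$ for $n\ge k$, so the $j$-th term is bounded by $n^{k}(1-p)^{n-k}$ up to a constant depending only on $p$ and $k$. Since $0<1-p<1$, the geometric factor $(1-p)^{n}$ dominates the polynomial factor $n^{k}$, and $\sum_{n} n^{k}(1-p)^{n}<\infty$. Summing over the finitely many values $j=0,\ldots,k$ yields $\sum_{n=1}^\infty \mathbb{P}(\eta_1+\ldots+\eta_n\leqslant x)<\infty$, which is the claim.

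There is no real obstacle here; the only place where one has to be slightly careful is the choice of $\delta$, which must use the hypothesis that $\eta_1$ is not degenerate at zero (otherwise $p=0$ and the binomial bound is vacuous). The argument is essentially the standard proof that the renewal function $U(x)=\sum_{n\ge 0}F^{*n}(x)$ is finite for every $x$ whenever the underlying distribution is not concentrated at $0$, and it is self-contained without invoking any renewal-theoretic machinery.
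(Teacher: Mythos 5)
Your proof is correct, but it goes a different way than the paper. The paper's argument is a two-line Chernoff-type bound: for small $t>0$, write $\mathbb{P}(\eta_1+\ldots+\eta_n\leqslant x)=\mathbb{P}\bigl(e^{-t(\eta_1+\ldots+\eta_n)}\geqslant e^{-tx}\bigr)\leqslant e^{tx}\bigl(\mathbb{E}e^{-t\eta}\bigr)^n$, and since $\eta$ is non-negative and not degenerate at zero, $\mathbb{E}e^{-t\eta}<1$, so the series is dominated by a convergent geometric series. You instead truncate at level $\delta$, note that the event $\{\eta_1+\ldots+\eta_n\leqslant x\}$ forces the $\mathrm{Bin}(n,p)$ count of summands exceeding $\delta$ to be at most $k=\lfloor x/\delta\rfloor$, and bound the resulting lower binomial tail by $(k+1)\,C(p,k)\,n^k(1-p)^n$, which is summable. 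All steps check out: the existence of $\delta$ with $p>0$ uses exactly the non-degeneracy hypothesis, the pointwise bound $\eta_1+\ldots+\eta_n\geqslant\delta\sum_i\mathbbm{1}\{\eta_i>\delta\}$ uses non-negativity, and polynomial-times-geometric decay gives convergence. (The only cosmetic caveat is the case $p=1$, where your phrase ``$0<1-p<1$'' fails; there the tail is zero for $n>k$ and the conclusion is trivial, so nothing breaks.) The trade-off: the paper's Laplace-transform bound is shorter and gives a clean geometric rate $e^{tx}q^n$, while your argument is the classical renewal-theoretic one showing finiteness of the renewal function $\sum_n F^{*n}(x)$, avoids transforms altogether, and makes the dependence on $x$ explicit through the finitely many binomial terms $j=0,\ldots,k$. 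Either proof serves the paper's purpose equally well.
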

\begin{proof}
Let $t$ be some small positive number and say that the non-negative random variables $\eta_1,\,\eta_2,\,\ldots$ are independent copies of $\eta$. Then, rearranging and using Markov's inequality, we obtain 
\begin{align*}
\sum_{n=1}^{\infty}\mathbb{P}(\eta_1+\ldots+\eta_n\leqslant x)
=\sum_{n=1}^{\infty}\mathbb{P}\left(e^{-t(\eta_1+\ldots+\eta_n)}\geqslant e^{-tx}\right)
\leqslant e^{tx}\sum_{n=1}^{\infty}\left(\mathbb{E}e^{-t\eta}\right)^n<\infty,
\end{align*}
since $\mathbb{E}e^{-t\eta}<1$ under the considered conditions.
\end{proof}
Of course, the upper bound of the sum $\sum_{n=1}^{\infty}\mathbb{P}(\eta_1+\ldots+\eta_n\leqslant x)$ can be improved compared to the given one; see for instance \cite[Proof of lem. 8]{RJ} and other literature on concentration inequalities.

\section{Discrete-time risk model}\label{22}

Let us consider the model \eqref{01}. Suppose $c\in\mathbb{N}$, $\theta\equiv 1$, the independent random variables $X_1,\,X_2,\,\ldots$ are non-negative integer-valued and follow the N-seasonal pattern, i.e. $X_i\dist X_{i+N}$ for all $i\in\mathbb{N}$ and some fixed $N\in \mathbb{N}$. If these requirements are satisfied, then the general E. S. Andersen's renewal risk model \eqref{01} becomes the discrete-time risk model
\begin{align}\label{discrete_time}
W(t)=u+ct-\sum_{i=1}^{\lfloor t\rfloor}X_i,\, t\geqslant 0,
\end{align}
where symbol $\lfloor\cdot \rfloor$ denotes the floor function.
Then, there is sufficient to consider \eqref{discrete_time} (in terms of $W(t)<0$ for at least one $t\geqslant0$) when $u\in\{0,\,1,\,2,\,\ldots\}=:\mathbb{N}_0$ and $ t\in\mathbb{N}$ only. Then, the ruin time and the ultimate time ruin probability have the following standard expressions
\begin{align}
&\tau_u=\begin{cases}&\min\{t\in\mathbb{N}: W(t)<0\},\\
& \infty, \mbox{\ if\ }\ W(t)\geqslant 0 \ \mbox{\ for all\ }\ t\in\mathbb{N}, \end{cases}\nonumber\\
&\psi(u)=\mathbb{P}\left(\tau_{u}<\infty\right)=\mathbb{P}\left(\sup_{k\geqslant
1}\sum_{i=1}^k(X_i-c)>u\right),\,u\in\mathbb{N}_0.\label{ruin_p_disc}
\end{align}

If we denote $\varphi=1-\psi$ the ultimate time survival probability, then, according to \eqref{ruin_p_disc},
\begin{align}\label{surv_p}
\varphi(u)=\mathbb{P}\left(\sup_{k\geqslant
1}\sum_{i=1}^k(X_i-c)\leqslant u\right),\,
u\in\mathbb{N}_0.
\end{align}

In \cite{SA} and various other papers, the survival probability is studied according to a slightly different definition than \eqref{surv_p}, i.e.
\begin{align}\label{surv_p_<}
\hat{\varphi}(u)=\mathbb{P}\left(\sup_{k\geqslant
1}\sum_{i=1}^k(X_i-c)< u\right).
\end{align}
It is easy to see that 
\begin{align}\label{eq_def}
\varphi(u)=\hat{\varphi}(u+1)
\end{align}
for all $u\in\mathbb{N}_0$. We now prove Theorem \ref{thm:discrete}.

  \begin{proof}[Proof of Theorem \ref{thm:discrete}]   
    We first demonstrate the proof for the most simplistic version of the homogeneous discrete-time risk model \eqref{discrete_time} when $c=1$ and $N=1$. Let $h_k=\mathbb{P}(X=k)$, $k\in\mathbb{N}_0$ and observe that conditions $\mathbb{E}X=1$ and $\mathbb{P}(X=1)=h_1<1$ imply $h_l>0$ for some $l\geqslant 2$. Indeed,
$$
\mathbb{E}X=h_1+2h_2+3h_3+\ldots=1
$$
and $h_1<1$ means that at least one probability out of $h_2,\,h_3,\,\ldots$ is positive. In addition, conditions $h_1<1$ and $\mathbb{E}X=1$ imply $h_0>0$. Indeed, if $h_0=0$, then $h_1+h_2+h_3+\ldots =1$ and
$$
1=\mathbb{E}X=h_1+2h_2+3h_3+\ldots>h_1+h_2+h_3+\ldots =1
$$
leads to the contradiction.

    Let us choose $l\geqslant 2$ such that $h_l=\mathbb{P}(X=l)>0$ and define the distribution of an integer-valued random vector $(X^*,\,X)$ by the following equalities:
    \begin{eqnarray*}
    &&\mathbb{P}(X^*=k,\,X=k)=h_k,\, k\in\mathbb{N}_0, k\neq l,\\
    &&\mathbb{P}(X^*=l,\,X=l)=h_l-\frac{\varepsilon}{l},\\
    &&\mathbb{P}(X^*=0,\,X=l)=\frac{\varepsilon}{l},\\
    &&\mathbb{P}(X^*=k,\,X=m)=0,\, \{k,\,m\}\in\mathbb{N}_0^2,\, \{k,\,m\}\neq \{0,\, l\},\,k\neq m,
        \end{eqnarray*}
        where $\varepsilon\in(0,\,l h_l)$ is arbitrary small.

Visually,  vector's $(X^*,\,X)$ distribution is the following

\begin{center}
\begin{tabular}{|c|c|c|c|c|c|c|c|c||c|}
\hline
$X^*\backslash X$&$0$&$1$&$2$&$\ldots$&$l-1$&$l$&$l+1$&$\ldots$&$\Sigma$\\
\hline
$0$&$h_0$&$0$&$0$&$\ldots$&$0$&$\varepsilon/l$&$0$&$\ldots$&$h_0+\varepsilon/l$\\
\hline
$1$&$0$&$h_1$&$0$&$\ldots$&$0$&$0$&0&$\ldots$&$h_1$\\
\hline
$2$&$0$&$0$&$h_2$&$\ldots$&$0$&$0$&$0$&$\ldots$&$h_2$\\
\hline
$\vdots$&$\vdots$&$\vdots$&$\vdots$&$\ddots$&$\vdots$&$\vdots$&$\vdots$&$\ddots$&$\vdots$\\
\hline
$l-1$&$0$&$0$&$0$&$\ldots$&$h_{l-1}$&$0$&$0$&$\ldots$&$h_{l-1}$\\
\hline
$l$&$0$&$0$&$0$&$\ldots$&$0$&$h_l-\varepsilon/l$&$0$&$\ldots$&$h_{l}-\varepsilon/l$\\
\hline
$l+1$&$0$&$0$&$0$&$\ldots$&$0$&$0$&$h_{l+1}$&$\ldots$&$h_{l+1}$\\
\hline
$\vdots$&$\vdots$&$\vdots$&$\vdots$&$\ddots$&$\vdots$&$\vdots$&$\vdots$&$\ddots$&$\vdots$\\
\hline
\hline
$\Sigma$&$h_0$&$h_1$&$h_2$&$\ldots$&$h_{l-1}$&$h_l$&$h_{l+1}$&$\ldots$&$1$\\
\hline
\end{tabular}
\end{center}

    It is easy to see, that $\mathbb{E}X^*=1 -\varepsilon<1$, and
\begin{align*}
\mathbb{P}(X^*\leqslant X)&=\sum_{k=0}^{\infty}\sum_{m=k}^{\infty}\mathbb{P}(X^*=k,\,X=m)\\
&=\mathbb{P}(X^*=0,\,X=l)+\sum_{k=0}^{\infty}\mathbb{P}(X^*=k,\,X=k)=1.
\end{align*}

    Let $(X^*_j,\,X_j)$, $j\in\mathbb{N}$, be  independent copies of random vector $(X^*,\,X)$. We have that $\mathbb{P}(X^*_j\leqslant X_j)=1$ for each $j\in\mathbb{N}$. Therefore,
\begin{align*}
\mathbb{P}(X_1^*+X_2^*\leqslant X_1+X_2)&=\sum_{k=0}^{\infty}\sum_{l=0}^{\infty}\mathbb{P}(X_1^*+k\leqslant X_1+l)\mathbb{P}(X_2^*=k,\,X_2=l)\\
&=\sum_{k=0,\ k\neq l}^{\infty}\mathbb{P}(X_1^*+k\leqslant X_1+k)h_k\\
&\hspace{2mm}+\ \mathbb{P}(X_1^*+l\leqslant X_1+l)\left(h_l -\frac{\varepsilon}{l}\right)\\
&\hspace{2mm}+\ \mathbb{P}(X_1^*\leqslant X_1+l)\frac{\varepsilon}{l}=1,
\end{align*}
due to $\mathbb{P}(X_1^*\leqslant X_1)=1$.

We now use the mathematical induction to show 
\begin{equation}\label{P}
\mathbb{P}\left(\sum_{k=1}^nX_k^*\leqslant\sum_{k=1}^nX_k\right)=1,\, n\in \mathbb{N}.
\end{equation}
Indeed, if $\mathbb{P}\left(\sum_{k=1}^nX_k^*\leqslant\sum_{k=1}^nX_k\right)=1$ up to some natural $n$, then we get that
\begin{align*}
\mathbb{P}\left(\sum_{k=1}^{n+1}X_k^*\leqslant\sum_{k=1}^{n+1}X_k\right)&=\sum_{k=0,\ k\neq l}^{\infty}\mathbb{P}\left(\sum_{k=1}^{n}X_k^*\leqslant\sum_{k=1}^{n}X_k\right)h_k\\
&\hspace{2mm}+\ \mathbb{P}\left(\sum_{k=1}^{n}X_k^*\leqslant\sum_{k=1}^{n}X_k\right)\left(h_l-\frac{\varepsilon}{l}\right)\\
&\hspace{2mm}+\ \mathbb{P}\left(\sum_{k=1}^{n}X_k^*\leqslant\sum_{k=1}^{n}X_k+l\right)\frac{\varepsilon}{l}=1.
\end{align*}

For $u\in\mathbb{N}_0$, the equality (\ref{P}) implies that
\begin{align*}
\psi(u)&=\mathbb{P}\left(\sup_{n\geqslant 1}\left\{\sum_{k=1}^nX_k-n\right\}>u\right)
=\mathbb{P}\left(\bigcup_{n=1}^\infty\left\{ \sum_{k=1}^n X_k>n+u\right\}\right)\\
&\geqslant\mathbb{P}\left(\bigcup_{n=1}^\infty\left\{ \sum_{k=1}^n X_k^*>n+u\right\}\right)
= \mathbb{P}\left(\sup_{n\geqslant 1}\left\{\sum_{k=1}^nX^*_k-n\right\}>u\right)\\&=:\psi_{\varepsilon}^*(u),
\end{align*}
or, equivalently, 
\begin{align}\label{ineq}
\varphi(u)\leqslant \varphi_{\varepsilon}^*(u),
\end{align}
for all $u\in\mathbb{N}_0$, where $\varphi=1-\psi$ and $\varphi_{\varepsilon}^*=1-\psi^*_{\varepsilon}$ are the model's survival probabilities.
 
    Let $s\in\mathbb{C}$ and $h^*_k=\mathbb{P}(X^*=k)$, $k\in\mathbb{N}_0$. Since $\mathbb{E}X^*=1-\varepsilon<1$, Corollary 3.2 of \cite{GR} implies that  the  probability generating function of the survival probability  $\varphi^*$ satisfies the following equation
\begin{align}\label{gen_phi}
\varphi^*(0)+\varphi^*(1)s+\varphi^*(2)s^2+\ldots=\frac{1-\mathbb{E}X^*}{G_{X^*}(s)-s}=\frac{\varepsilon}{G_{X^*}(s)-s},\,|s|<1,
\end{align}
where $G_{X^*}(s)$ is the probability generating function  of r.v. $X^*$, i.e.
$$
G_{X^*}(s)=h^*_0+h^*_1s+h^*_2s^2+\ldots,\,|s|\leqslant 1.
$$
Inequality \eqref{ineq} and  equation  \eqref{gen_phi} imply that
\begin{align*}
0\leqslant \varphi(0)\leqslant\frac{\varepsilon}{h^*_0}=\frac{\varepsilon}{h_0+\varepsilon/l},\ \ 
0\leqslant \varphi(1)\leqslant\frac{\varepsilon(1-h_1)}{(h_0+\varepsilon/l)^2},
\end{align*}
and, in general,  
\begin{align*}
0\leqslant\varphi(n)\leqslant\varepsilon \cdot \frac{1}{n!}\lim_{s\to 0}\frac{d^n}{ds^n}\left(\frac{1}{G_{X^*}(s)-s}\right)
\end{align*}
for all $n\in\mathbb{N}_0$. Since  $\varepsilon$ can be arbitrarily small,  we conclude that $\varphi(u)=0$ or, equivalently, $\psi(u)=1$ for all $u\in\mathbb{N}_0$. 

It is worth mentioning that, having $\varphi(0)=0$, the equality $\varphi(u)=0$ for all $u\in\mathbb{N}$ can be  concluded from the following recurrence formula  (see, for instance, \cite[Section 6]{Dickson}, \cite{Dickson_Walters}, \cite{Shiu}, \cite{Shiu1})
\begin{align}\label{eq:recurr}
\varphi(u)=\frac{1}{h_0}\left(\varphi(u-1)-\sum_{k=1}^{u}\varphi(u-k)h_k\right),\,u\in\mathbb{N}.
\end{align}
 Indeed, the recurrence \eqref{eq:recurr} yields $\varphi(u),\,u\in\mathbb{N}_0$ being the multiple of $\varphi(0)=1-\mathbb{E}X$. More precisely,
$$
\varphi(u)=\alpha_u \varphi(0),
$$
with 
$$
\alpha_0=1,\,\alpha_u=\frac{1}{h_0}\left(\alpha_{u-1}-\sum_{k=1}^{u}\alpha_{u-k} h_k\right),\,u\in\mathbb{N}.
$$
The latter expression  can be verified by mathematical induction. So, the particular case with  $c=1$ and $N=1$ in Theorem \ref{thm:discrete} is proved. 

\medskip

The general case when $c\in\mathbb{N}$ and $N\in \mathbb{N}$ in the discrete-time risk model \eqref{discrete_time} can be considered by the same means. Let us explain how. 

Let us suppose  the model \eqref{discrete_time} is generated by $X_1,\,X_2,\,\ldots,\,X_N$ periodically occurring independent non-negative and integer-valued random variables, i.e.\\ $X_{i+N}\dist X_i$ for all $i\in\mathbb{N}$ and some fixed $N\in\mathbb{N}$. In such a case  we can choose any random variable from  $\{X_1,\,X_2,\,\ldots,\,X_N\}$ and define the random vector $(X^*_j,\,X_j)$ such that $\mathbb{P}(X^*_j\leqslant X_j)=1$ where $j\in\{1,\,2,\,\ldots,\,N\}$ is some fixed number. Obviously, the random vector $(X^*_j,\,X_j)$ must be defined in a similar way as vector  $(X^*,\,X)$ before, where both random variables $X^*_j$ and $X_j$ attain the same values and the probability of some smaller value of $X^*_j$ gets enlarged by some arbitrarily small value and the probability of some larger value of $X^*_j$ gets reduced by the same size. Note that conditions  $\mathbb{P}(X_j\geqslant c)=1$ and $\mathbb{P}(S_N=cN)<1$ imply the estimate  $cN-\mathbb{E}S_N<0$ which is not the case under consideration. Hence, always there exists at least one value in the set $\{0,\,1,\,\ldots,\,c-1\}$ for r.v. $X_j$ which we can choose to enlarge its probability defining $X^*_j$.  Then we achieve 
$$
\varepsilon:=cN-\mathbb{E}S^*_N>cN-\mathbb{E}S_N=0,
$$
where $S^*_N=X_1+\ldots+X^*_j+\ldots+X_N$. By the same arguments  as deriving inequality \eqref{ineq}, we get that 
$0\leqslant\varphi(0)\leqslant\varphi_\varepsilon^*(0)$, where $\varphi_\varepsilon^*(0)$ is the ultimate time survival probability at $u=0$ for the model in which r.v. $X^*_j$ replaces r.v. $X_j$ for some $j\in\{1,2,\ldots,N\}$. According to \cite[Thm. 4]{SA} we obtain  
\begin{align}\label{phi_0_N}
\varphi_\varepsilon^*(0)=\frac{m^{*(1)}_0}{\mathbb{P}(S^*_N=0)}
\end{align}
if  $\mathbb{P}(S^*_N=0)>0$,
where $m^{*(1)}_0$ is the first component of the solution of the following system of linear equations
\begin{align}\label{main_syst}
M_{c N\times c N}
\times
\begin{pmatrix}
m^{*(1)}_0\\
m^{*(1)}_1\\
\ldots\\
m^{*(1)}_{c-1}\\
m^{*(2)}_0\\
m^{*(2)}_1\\
\ldots\\
m^{*(2)}_{c-1}\\
\ldots\\
m^{*(N)}_0\\
m^{*(N)}_1\\
\ldots\\
m^{*(N)}_{c-1}
\end{pmatrix}_{c N\times1}\hspace{-5mm}
=
\begin{pmatrix}
0\\
\vdots\\
0\\
c N-\mathbb{E}S^*_N
\end{pmatrix}_{c N\times1}\hspace{-0.5cm}
=
\begin{pmatrix}
0\\
\vdots\\
0\\
\varepsilon
\end{pmatrix}_{c N\times1}\hspace{-0.5cm},
\end{align}
where $M_{c N\times c N}$ is a certain matrix with elements related to the roots of equation $G_{S^*_N}(s)=s^{cN},\,|s|\leqslant 1$ (see \cite[Sec. 3]{SA}). Letting $\varepsilon\to0^+$ 
we derive from the system \eqref{main_syst} that $\varphi_\varepsilon^*(0)\to 0$ because of \eqref{phi_0_N}. Consequently $\varphi(0)=0$ due to the estimate $0\leqslant \varphi(0)\leqslant \varphi_\varepsilon^*(0)$ provided for an arbitrary $\varepsilon >0$. It should be noted that the requirement  $\mathbb{P}(S^*_N=0)>0$ for equality  \eqref{phi_0_N} does not reduce generality, because $\mathbb{P}(S^*_N=0)$ can be replaced by the probability of the smallest value of $S^*_N$ if $\mathbb{P}(S^*_N=0)=0$, see the comments in \cite[Sec. 4]{SA}. In addition, the non-singularity of the matrix $M_{c N\times c N}$ in \eqref{main_syst} is not known in general, see \cite[Sec. 4]{SA} and \cite{GR}, also \cite{GJ}. On the other hand, if $c\in\mathbb{N}$, $N=1$ and the roots of $G_{X^*}=s^c$ are simple, the solution of \eqref{main_syst} admits the closed-form expression and, obviously, $m^{*(1)}_0$ is the multiple of $c-\mathbb{E}X^*=\varepsilon$, see \cite{GR}. In cases when the non-singularity of the matrix $M_{c N\times c N}$ in \eqref{main_syst} remains questionable, we can refer to \cite[Thm. 3]{SA}, for the different proof that $\varphi(0)=0$ if the net profit condition is neutral $\mathbb{E}S_N=cN$ and $\mathbb{P}(S_N=cN)<1$.

Having $\varphi(0)=0$, the remaining values $\varphi(u)=0,\,u\in\mathbb{N}$ can be  obtained  by the recurrence relation
$$
\varphi(u)
=\hspace{-0.5cm}\sum_{\substack{i_1\leqslant u+c\\i_1+i_2\leqslant u+2c\\
\ldots \vspace{1mm} \\
i_1+i_2+\ldots+i_N\leqslant u+c N}}\hspace{-5mm}\mathbb{P}(X_1=i_1)\mathbb{P}(X_2=i_2)\cdots\mathbb{P}(X_N=i_N)\,
\varphi\left(u+c N-\sum_{j=1}^Ni_j\right),
$$
see \cite[eq. (5)]{SA} or by the following expression of survival probability generating function (see \cite[Thm. 2]{SA})
\begin{align*}
\varphi_\varepsilon^*(0)+\varphi_\varepsilon^*(1)s+\varphi_\varepsilon^*(2)s^2+\ldots=\frac{\mathbbm{u}^T\mathbbm{v}}{G_{S^*_N}(s)-s^{c N}},
\end{align*}
where, having in mind that some $X_j$ from  $\{X_1,\,\ldots,\,X_n\}$ is replaced by $X^*_j$,
\begin{align*}
\mathbbm{u}=\begin{pmatrix}
s^{c(N-1)}\\s^{c(N-2)}G_{S^*_1}(s)\\s^{c(N-3)}G_{S^*_2}(s)\\ \vdots\\s^c G_{S^*_{N-2}}(s)\\G_{S^*_{N-1}}(s)
\end{pmatrix},\,
\mathbbm{v}=
\begin{pmatrix}
&\sum\limits_{i=0}^{c-1}m_k^{*(2)}\sum\limits_{k=i}^{c-1}s^k F_{X_1}(k-i)\\
&\sum\limits_{i=0}^{c-1}m_i^{*(3)}\sum\limits_{k=i}^{c-1}s^k F_{X_2}(k-i)
\\
&\vdots\\
&\sum_{i=0}^{c-1}m_i^{*(j+1)}\sum\limits_{k=i}^{c-1}s^k F_{X_j^*}(k-i)\\
&\vdots\\
&\sum_{i=0}^{c-1}m_i^{*(N)}\sum\limits_{k=i}^{c-1}s^k F_{X_{N-1}}(k-i)\\
&\sum_{i=0}^{c-1}m_i^{*(1)}\sum\limits_{k=i}^{c-1}s^k F_{X_N}(k-i)\\
\end{pmatrix},
\end{align*}
and
$$
G_{S_l^*}(s),\,|s|\leqslant 1,\,l\in\{1,\,2,\,\ldots,\,N-1\}
$$
is the probability generating function of random variable
\begin{equation*}
 S^*_l=\begin{cases}
 X_1+\ldots+X_l & {\rm if}\  l<j,\\
 X_1+\ldots+X_{j-1}+X_j^*+X_{j+1}+\ldots +X_l & {\rm  if}\  l\geqslant j, 
\end{cases}   
\end{equation*}
$F_{X_l}$  is the distribution function of $X_l$ and the collection $$\left\{m^{*(1)}_0,\,m^{*(1)}_1,
 \ldots, m^{*(1)}_{c-1},\, m^{*(2)}_0,\,m^{*(2)}_1, \ldots,\,m^{*(2)}_{c-1}, \ldots, m^{*(N)}_0,\,
m^{*(N)}_1,\,\ldots,\,m^{*(N)}_{c-1}\right\}$$ satisfies the system \eqref{main_syst} being the multiple of $cN-\mathbb{E}S^*_N$.
\end{proof}

\section{Classical risk model}\label{sec:Classical} 
In this section we prove Theorem \ref{thm:classical}.

\begin{proof}[Proof of Theorem \ref{thm:classical}]
Since the random variable $X$ in model \eqref{01} is non-negative and $X\equiv0$ is out of options for the considered stochastic process, then $\mathbb{E}X > 0$ and there exists $a>0$ such that $\mathbb{P}(X>a)>0$. Similarly as proving Theorem \ref{thm:discrete}, we now define the pair of dependent random variables $(X^*,\, X)$ where $X^*$ for any $\varepsilon\in(0,a)$ is
$$
X^*=
\begin{cases} 
X - \varepsilon &{\rm if}\  X > a, \\
X & {\rm if }\  X \leqslant a. 
 \end{cases}
$$
For this new r.v. 
\begin{align*}
\mathbb{E}X^* &=\mathbb{E}X-\varepsilon\mathbb{P}(X>a)<\mathbb{E}X,\\
\mathbb{P}(X^* \leqslant X)&=\mathbb{P}(X^* \leqslant X, X>a)+\mathbb{P}(X^* \leqslant X, X\leqslant a)=1.
\end{align*} 
Let $(X^*_j,\, X_j),\, j=1,\,2,\,\ldots$ be independent copies of $(X^*,\, X)$. Then we have:
\begin{align*}&\mathbb{P}\left(X^*_j \leqslant X_j\right)=1,\text{ for all } j\in \mathbb{N},\\
&\mathbb{P}\left(\sum_{j=1}^{n}X^*_j \leqslant \sum_{j=1}^{n}X_j\right)=1,\text{ for all } n\in \mathbb{N},\\
&\mathbb{P}\left(\sum_{j=1}^{n}(X^*_j-c\theta_j) \leqslant \sum_{j=1}^{n}(X_j-c\theta_j)\right)=1,\text{ for all } n\in \mathbb{N},\\
&\mathbb{P}\left(\sup_{n\geqslant1}\sum_{j=1}^{n}(X^*_j-c\theta_j)\leqslant\sup_{n\geqslant1}\sum_{j=1}^{n}(X_j-c\theta_j)\right)=1,
\end{align*}
and, by similar arguments as in (\ref{ineq}),
$\psi_\varepsilon^*(u)\leqslant \psi(u)\leqslant1$ for all $u\geqslant0$. Conditions  $$\mathbb{E}X^*=\mathbb{E}X-\varepsilon\mathbb{P}(X>a),\ \lambda \mathbb{E}X/c=1$$ and  well-known formula for  $\psi_\varepsilon^*(0)$ (see, for example, \cite{RSST} or many other sources for the Pollaczek–Khinchine formula) imply that
\begin{align*}
\psi_\varepsilon^*(0)=\frac{\lambda \mathbb{E}X^*}{c}=1-\frac{\lambda\varepsilon \mathbb{P}(X>a)}{c}\leqslant\psi(0)\leqslant1.
\end{align*}
By letting $\varepsilon \to 0^+$ in the last inequalities, we get $\psi(0)=1$, or equivalently $\varphi(0)=0$. Then, $\psi(u)=1$ for all $u\geqslant0$ is implied by the same Pollaczek–Khinchine formula observing $\varphi_\varepsilon^*(u)$ being the multiple of $\varphi_\varepsilon^*(0)$. Indeed,
\begin{align*}
\varphi_\varepsilon^*(u)&=\left(1-\frac{\lambda \mathbb{E}X^*}{c}\right)\left(1+\sum_{n=1}^{\infty}\left(\frac{\lambda \mathbb{E}X^*}{c}\right)^nF_I^{*n}(u)\right)\\
&=\varphi_\varepsilon^*(0)\left(1+\sum_{n=1}^{\infty}\left(\psi_\varepsilon^*(0)\right)^nF_I^{*n}(u)\right),\,u\geqslant 0,
\end{align*}
where
$$
F_I(u)=\frac{1}{\mathbb{E}X^*}\int_{0}^{u}\mathbb{P}(X^*>x)\,{\rm d}x
$$
and $F_I^{*n}$ denotes the $n$-fold convolution of $F_I$.
Here
$$
\sum_{n=1}^{\infty}\left(\psi^*(0)\right)^nF_I^{*n}(u)\leqslant
\sum_{n=1}^{\infty}F_I^{*n}(u)=\sum_{n=1}^{\infty}\mathbb{P}(\eta_1+\ldots+\eta_n\leqslant u)<\infty,
$$
because of Lemma \ref{lem:concentration}, where the non-negative independent and identically distributed random variables $\eta_1,\,\eta_2,\,\ldots$ are described by the distribution function $F_I$.

\end{proof}

\section{E.S. Andersen's model}\label{sec:Andersen}

In this section we prove Theorem \ref{thm:Andersen}.
\begin{proof}[Proof of Theorem \ref{thm:Andersen}]
    Arguing the same as proving Theorem \ref{thm:classical} in Section \ref{sec:Classical}, we can define the random vector $(X^*,\,X)$, its independent copies $(X_1^*,\,X_1),\,(X_2^*,\,X_2),\,\ldots$ and show that $\psi_\varepsilon^*(u)\leqslant \psi(u)\leqslant1$ for all $u\geqslant 0$. Let $S^*_n=\sum_{i=1}^{n}(X^*_i-c\theta_i)$ and $S_n=\sum_{i=1}^{n}(X_i-c\theta_i)$ for all $n\in\mathbb{N}$. Then, see \cite[eq. (10)]{EMbrechts},
$$
\psi_\varepsilon^*(0)=1-\exp\left\{-\sum_{n=1}^{\infty}\frac{\mathbb{P}(S^*_n>0)}{n}\right\},
$$
because of the net profit condition  $\mathbb{E}X^*-c\mathbb{E}\theta=-\varepsilon \mathbb{P}(X>a)<0$.

It is known that, see \cite[Thm. 4.1]{Spitzer1}, 
$
\mathbb{E}(X^*-c\theta)<0
$
implies
$$
\sum_{n=1}^{\infty}\frac{\mathbb{P}(S^*_n>0)}{n}<\infty,
$$
while $\mathbb{E}(X-c\theta)=0$
implies
$$
\sum_{n=1}^{\infty}\frac{\mathbb{P}(S_n>0)}{n}=\infty.
$$
Therefore
\begin{align*}
\varphi(0)\leqslant \varphi^*_{\varepsilon}(0)\leqslant \exp\left\{-\sum_{n=1}^{N}\frac{\mathbb{P}(S^*_n>0)}{n}\right\}
\end{align*}
for any $N\in\mathbb{N}$. By letting $\varepsilon\to0^+$ in the last inequalities, we obtain
\begin{align*}
\varphi(0)\leqslant \exp\left\{-\sum_{n=1}^{N}\frac{\mathbb{P}(S_n>0)}{n}\right\}
\end{align*}
and consequently $\varphi(0)=0$ as $N$ can be arbitrarily large and the series 
$$
\sum\limits_{n=1}^{\infty}\frac{\mathbb{P}(S_n>0)}{n}
$$ 
diverges. The equality $\psi(u)=1$ for all $u\geqslant 0$ is implied by the fact that $\varphi^*_\varepsilon(u)$ is the multiple of $\varphi^*_\varepsilon(0)$. Indeed, by the Pollaczek–Khinchine formula (see \cite[eq. (10)]{EMbrechts})
\begin{align*}
\varphi_\varepsilon^*(u)&=e^{-A}\left(1+\sum_{n=1}^{\infty}\left(1-e^{-A}\right)^nH^{*n}(u)\right)\\
&=\varphi_\varepsilon^*(0)\left(1+\sum_{n=1}^{\infty}\left(\psi_\varepsilon^*(0)\right)^nH^{*n}(u)\right),\,u\geqslant0,
\end{align*}
where
\begin{align*}
&A=\sum_{n=1}^{\infty}\frac{\mathbb{P}(S^*_n>0)}{n},\\
&H(u)=\frac{F_{+}(u)}{F_{+}(\infty)},\\
&F_{+}(u)=\mathbb{P}(S^*_{N^+}\leqslant u)\\
&N^+=\inf\{n\geqslant1:S^*_n>0\},\\
&S^*_n=\sum_{i=1}^{n}\left(X^*_i-c\theta_i\right),
\end{align*}
and $H^{*n}$ denotes the $n$-fold convolution of $H$. Proof of the considered theorem follows according to the comments at the end of the proof of Theorem \ref{thm:classical}.
\end{proof}


\end{document}